\def\<#1,#2>{\left\langle #1,#2 \right\rangle}
\newcommand{\norm}[1]{\|#1\|}
\newcommand{\abs}[1]{|#1|}
\newcommand{\newterm}[1]{{\bf #1}}
\def\eqref#1{equation~\ref{#1}}
\newcommand{\1}[1]{\mathds{1}_{#1}}
\def\rvz{{\mathbf{z}}}
\def\vx{{\bm{x}}}
\DeclareMathAlphabet{\mathsfit}{\encodingdefault}{\sfdefault}{m}{sl}
\SetMathAlphabet{\mathsfit}{bold}{\encodingdefault}{\sfdefault}{bx}{n}
\newcommand{\prob}{\mathbb{P}}
\newcommand{\E}[2][]{\mathbb{E}_{#1}[#2]} %
\newcommand{\R}{\mathbb{R}}
\newcommand{\var}[2][]{{ \operatorname{Var}_{#1}\left(#2\right)}}
\newtheorem{theorem}{Theorem}
\newtheorem{lemma}{Lemma}
\newtheorem{definition}{Definition}
\newtheorem{remark}{Remark}
\newtheorem{example}{Example}
\newtheorem{claim}{Claim}
\crefname{assumption}{Assumption}{Assumptions}
\crefname{proposition}{Proposition}{Propositions}
\crefname{example}{Example}{Examples}
\crefname{remark}{Remark}{Remarks}
\crefname{claim}{Claim}{Claims}
\crefname{conjecture}{Conjecture}{Conjectures}
\definecolor{lyr}{rgb}{1,0,0}
\title{Probability Tools for Sequential Random Projection}
\author{%
  Yingru Li\thanks{The author would like to thank Professor Zhi-Quan (Tom) Luo for advising this project.} \\
    The Chinese University of Hong Kong, Shenzhen, China \\
  \texttt{yingruli@link.cuhk.edu.cn} \\
}
\begin{document}

\maketitle

\begin{abstract}
  We introduce the first probabilistic framework tailored for sequential random projection, an approach rooted in the challenges of sequential decision-making under uncertainty. The analysis is complicated by the sequential dependence and high-dimensional nature of random variables, a byproduct of the adaptive mechanisms inherent in sequential decision processes. Our work features a novel construction of a stopped process, facilitating the analysis of a sequence of concentration events that are interconnected in a sequential manner. By employing the method of mixtures within a self-normalized process, derived from the stopped process, we achieve a desired non-asymptotic probability bound. This bound represents a non-trivial martingale extension of the Johnson-Lindenstrauss (JL) lemma, marking a pioneering contribution to the literature on random projection and sequential analysis.

\end{abstract}

\section{Introduction}
\label{sec:intro}
The evolution of random projection from a dimensionality reduction technique to a cornerstone of sequential decision-making processes marks a significant leap in computational mathematics and machine learning. Random projection traditionally employs a matrix \( \Pi = (\rvz_1, \ldots, \rvz_d) \in \mathbb{R}^{M \times d} \) to transform a high-dimensional vector \( \vx = (x_1, \ldots, x_d)^\top \in \mathbb{R}^d \) into a lower-dimensional space, preserving the Euclidean geometry within a bounded error as guaranteed by the Johnson-Lindenstrauss (JL) lemma. This preservation of geometric relationships, crucial for the efficacy of data compression and analysis techniques, is foundational to the lemma's broad applicability, such as computer science~\citep{indyk2001algorithmic,muthukrishnan2005data}, signal processing~\citep{candes2006near,baraniuk2008simple} and numerical linear algebra~\citep{woodruff2014sketching}.

\subsection{Sequential random projection}
Recent advancements, especially in reinforcement learning, underscore the pressing need for computational models that not only accommodate but thrive on the epistemic uncertainties of sequential decision-making \citep{li2022hyperdqn,li2023efficient,li2024hyperagent}. In these dynamic environments, the application of random projection must navigate the added complexity of decisions \((x_t)_{t \ge 1}\) that are influenced by a history of previous decisions and projection vectors (\(\rvz_0, x_1, \rvz_1, \ldots, x_{t-1}, \rvz_{t-1}\)), introducing a layer of sequential dependence absent in static models. More precisely, the sequential relationship among the random variables is
\begin{align}
\label{eq:sequential-dependence}
    x_t = f_t(x_1, \rvz_1, \ldots, x_{t-1}, \rvz_{t-1}), \quad t \ge 1,
\end{align}
where $f_t$ describes the relationship at time $t$, and $\rvz_t$ is sampled from a distribution over $\R^M$ with independent source of randomness.
This sequential relationships can be also described in \cref{fig:dependence-graph} using graphical model.

  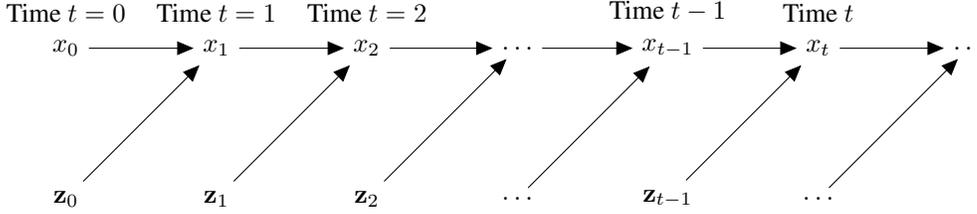
\begin{figure}[htbp]
    \begin{tikzpicture}[node distance=2cm, auto]
      \node (x0) {$x_0$};
      \node (Z0) [below of=x0] {$\rvz_0$};
      \node (x1) [right of=x0] {$x_1$};
      \node (z1) [below of=x1] {$\rvz_1$};
      \node (x2) [right of=x1] {$x_2$};
      \node (z2) [below of=x2] {$\rvz_2$};
      \node (xdot) [right of=x2] {$\ldots$};
      \node (zdot) [below of=xdot] {$\ldots$};
      \node (xtmins1) [right of=xdot] {$x_{t-1}$};
      \node (ztmins1) [below of=xtmins1] {$\rvz_{t-1}$};
      \node (xt) [right of=xtmins1] {$x_{t}$};
      \node (zdot2) [below of=xt] {$\ldots$};
      \node (xdot2) [right of=xt] {$\ldots$};

      \draw[->] (x0) -- (x1);
      \draw[->] (Z0) -- (x1);
      \draw[->] (x1) -- (x2);
      \draw[->] (z1) -- (x2);
      \draw[->] (ztmins1) -- (xt);
      \draw[->] (x2) -- (xdot);
      \draw[->] (z2) -- (xdot);
      \draw[->] (xdot) -- (xtmins1);
      \draw[->] (zdot) -- (xtmins1);
      \draw[->] (xtmins1) -- (xt);
      \draw[->] (xt) -- (xdot2);
      \draw[->] (zdot2) -- (xdot2);

      \node[above] at (x0.north) {Time $t=0$};
      \node[above] at (x1.north) {Time $t=1$};
      \node[above] at (x2.north) {Time $t=2$};
      \node[above] at (xtmins1.north) {Time $t-1$};
      \node[above] at (xt.north) {Time $t$};
    \end{tikzpicture}
    \caption{{{Sequential dependence}} of {{high-dimensional}} random variables due to the adaptive nature of sequential decision-making.}
    \label{fig:dependence-graph}
  \end{figure}

\paragraph{Unpacking the Challenges.}
This sequential dependence introduces significant analytical hurdles, diverging sharply from the assumptions that underpin classical random projection methods:
\begin{itemize}
    \item \emph{Traditional analysis of random projections} assumes the data vector $\vx = (x_1, \ldots, x_d)$ is fixed before the generation of the projection matrix $\Pi = (\rvz_1, \ldots, \rvz_d)$. All existing analysis for Johnson-Lindenstrauss or the studies for extreme singular values of random matrices~\citep{vershynin2012introduction} rely on some specific distributional properties of the random matrix $\Pi$. For example, conditions on independent entries across $\Pi$~\citep{indyk1998approximate,achlioptas2003database,matouvsek2008variants,vershynin2012introduction}, independent rows or columns across $\Pi$~\citep{vershynin2012introduction,kane2014sparser,cohen2018simple,li2024simple} are required to facilitate the concentration inequalities underlying the analysis.
    \item \emph{Analytical difficulties in sequential settings}: In the sequential setups, the decisions $x_t$ and projection vectors $\rvz_t$ are evolved together with sequential dependence described in \cref{eq:sequential-dependence}. Conditioned on the decision at time \(t\), the data \(x_t\), the preceding projection vectors (\(\rvz_0, \ldots, \rvz_{t-1}\)) lose their independence and identical distribution characteristics. This departure from independence directly challenges the foundational assumptions of analytical methods in random projection, complicating the task of maintaining accurate dimensionality reduction over sequential decisions. Specifically, without a clear understanding of the conditional distribution \(P_{(\rvz_{t'})_{t'<t} \mid x_t}\), traditional methods that rely on the specific distributional properties of projections cannot be straightforwardly applied. This limitation underscores a critical gap in our ability to predict and control the behavior of sequential random projections.
\end{itemize}

\paragraph{Innovations and Contributions.}

In addressing the challenges inherent in sequential random projection, this research inaugurates a comprehensive probabilistic framework, specifically designed to tackle the intricacies of sequential dependencies. This pioneering initiative represents a seminal contribution to the literature on random projection, particularly in sequential decision-making contexts. Our work is distinguished by two principal innovations:
\begin{itemize}
    \item[1.] \textbf{Technical innovations in stopped martingale}: Central to our contributions is the innovative construction of a stopped process, meticulously engineered to manage deviation behaviors within sequential processes. This construction crucially facilitates the precise control of concentration events over time, thereby enabling the analysis of the error bound in sequential setting.
    \item[2.] \textbf{Sequential extension of the Johnson–Lindenstrauss}: Through the employment of the method of mixtures, integrated with a self-normalized process, we have derived non-asymptotic probability bounds. These bounds represent a non-trivial extension of the Johnson–Lindenstrauss lemma into the realm of sequential analysis, equipping researchers and practitioners with a powerful analytical tool for the exploration of high-dimensional data in sequentially adpative processes.
\end{itemize}

The contributions of this research serve to bridge significant gaps in existing methodological frameworks, furnishing novel insights and methodologies for the application of random projection in sequential settings.
By laying a robust foundation for the analysis of dependencies among projection vectors in sequential contexts, our work not only surmounts the immediate challenges posed by sequential random projection but also forges a path for future investigations and applications in this critical intersection of mathematics and computational science. In doing so, it heralds a new paradigm in the analysis and application of random projection techniques for sequential, adaptive, high-dimensional data processing, promising to significantly influence subsequent research directions and applications in related fields.

\section{Probabilistic formalism \& statements}
\label{sec:tools}

\subsection{Probabilistic formalism}
One of the difficulties in the analysis is to deal with the sequential dependence structure among the random variables generated from the sequential-decision making problem such as bandit and reinforcement learning problems.
We define some important concept that would be useful in the analysis.
Let $(\Omega, \mathcal{F}, (\mathcal{F}_t)_{t \ge 0}, \prob)$ be a complete filtered probability space. We first consider the measurable properties within the filtered probability space.
\begin{definition}[Adapted process]
  For an index set $I$ of the form $\{t \in \mathbb{N}: t \ge t_0 \}$ for some $t_0 \in \mathbb{N}$, we say a stochastic process $(X_t)_{t \in I}$ is adapted to the filtration $(\mathcal{F}_t)_{t \in I}$ if each $X_t$ is $\mathcal{F}_t$-measurable.
\end{definition}
\begin{definition}[(Conditionally) $\sigma$-sub-Gaussian]
  \label{def:conditional-subgaussion}
  A random variable $X \in \R$ is $\sigma$-sub-Gaussian if
  \[
    \E{\exp( \lambda X ) } \le \exp\left( \frac{\lambda^2 \sigma^2}{2}  \right), \quad \quad \forall \lambda \in \R.
  \]
  Let $(X_t)_{t \ge 1} \subset \R$ be a stochastic process adapted to filtration $(\mathcal{F}_{t})_{t \ge 1}$. Let $\sigma = (\sigma_t)_{t\ge 0}$ be a stochastic process adapted to filtration $(\mathcal{F}_t)_{t\ge 0}$.
  We say the process is $(X_t)_{t \ge 1}$ is conditionally $\sigma$-sub-Gaussian if
  \[
    \E{\exp( \lambda X_t ) \mid \mathcal{F}_{t-1} } \le \exp \left( \frac{ \lambda^2 \sigma^2_{t-1}}{2}  \right), \quad a.s. \quad \forall \lambda \in \R.
  \]
  Specifically for the index $t+1$, we can say $X_{t+1}$ is ($\mathcal{F}_{t}$-conditionally) $\sigma_{t}$-sub-Gaussian.
  If $\sigma_t$ is a constant $\sigma$ for all $t \ge 0$, then we just say (conditionally) $\sigma$-sub-Gaussian.

  For a random vector $X \in \R^M$ or vector process $(X_t)_{t\ge 1} \subset \R^M$ in high-dimension, we say it is $\sigma$-sub-Gaussian is for every fixed $v \in \mathbb{S}^{M-1}$ if the random variable $\langle v, X \rangle$ , or the scalarized process $(\langle v, X_t \rangle)_{t \ge 1}$ is $\sigma$-sub-Gaussian.
\end{definition}

\begin{definition}[Almost sure unit-norm]
  \label{def:unit-norm}
  We say a random variable $X$ is almost sure unit-norm if $\norm{X}_2 = 1$ almost surely.
\end{definition}

Here, we give an example of distribution in $\R^M$ in \cref{ex:sphere} that fits the above mentioned definitions: the uniform distribution $\mathcal{U}(\mathbb{S}^{M-1})$ over unit sphere $\mathbb{S}^{M-1}$. The following recent result for the moment generating function (MGF) of Beta distribution is useful for the characterization of the sub-Gaussian property of the uniform distribution over the sphere.
\begin{lemma}[MGF of Beta distribution~\citep{li2024simple}]
  \label{lem:mgf-beta}
  For any $\alpha, \beta \in \R_+$ with $\alpha \ge \beta$,
  random variable $X \sim \operatorname{Beta}(\alpha, \beta)$ has variance $\var{X} = \frac{\alpha \beta}{(\alpha + \beta)^2 (\alpha + \beta + 1)}$ and the centered MGF $\E{\exp(\lambda(X - \E{X}))} \le \exp( \frac{\lambda^2 \var{X}}{2} )$.
\end{lemma}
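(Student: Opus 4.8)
\textbf{The variance.} This part is routine: the raw moments of $X\sim\operatorname{Beta}(\alpha,\beta)$ are $\E{X^k}=\frac{B(\alpha+k,\beta)}{B(\alpha,\beta)}=\prod_{i=0}^{k-1}\frac{\alpha+i}{\alpha+\beta+i}$, so $\E{X}=\frac{\alpha}{\alpha+\beta}$ and $\E{X^2}=\frac{\alpha(\alpha+1)}{(\alpha+\beta)(\alpha+\beta+1)}$, and I would obtain $\var{X}=\frac{\alpha\beta}{(\alpha+\beta)^2(\alpha+\beta+1)}$ by subtracting $\E{X}^2$ and clearing denominators.

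\textbf{The MGF bound: reduction.} Write $m=\E{X}=\frac{\alpha}{\alpha+\beta}$, $v=\var{X}$, $c=\alpha+\beta$, and $M(\lambda)=\E{\exp(\lambda X)}$. Then $M$ is the confluent hypergeometric function ${}_1F_1(\alpha;c;\lambda)$ and solves Kummer's equation $\lambda M''+(c-\lambda)M'-\alpha M=0$. I would pass to the centered cumulant generating function $\psi(\lambda)=\log M(\lambda)-\lambda m$; substituting $M=\exp(\psi+\lambda m)$ into Kummer's equation and using the elementary identities $cm=\alpha$ and $m(1-m)=(c+1)v$ turns it into the Riccati-type relation
\begin{equation}\label{eq:beta-riccati}
\lambda\,\psi''(\lambda)+\lambda\,\psi'(\lambda)^2+\bigl(c+(2m-1)\lambda\bigr)\psi'(\lambda)-(c+1)\,v\,\lambda=0.
\end{equation}
Since $\psi(0)=\psi'(0)=0$ and $\psi''(0)=v$, the assertion is exactly $\psi(\lambda)\le\tfrac v2\lambda^2$; integrating from $0$, it suffices to prove the first-order estimate $\psi'(\lambda)\le v\lambda$ for $\lambda\ge0$. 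The hypothesis $\alpha\ge\beta$, i.e. $m\ge\tfrac12$, is what fixes this orientation; in the symmetric case $\alpha=\beta$ the two-sided bound then follows from the reflection $X\mapsto1-X$ (which also recovers the sharp sub-Gaussian proxy for the uniform law on the sphere).

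\textbf{The main step and the obstacle.} The plan is to establish $\psi'(\lambda)\le v\lambda$ on $[0,\infty)$ through one of two handles. Probabilistically, $\psi'(\lambda)=\E{X_\lambda}-m=\int_0^\lambda\var{X_t}\,dt$, where $X_t$ denotes the exponential tilt of $X$ by $t$, so it would suffice to show that for $\alpha\ge\beta$ one has $\var{X_t}\le v$ for every $t\ge0$ — for which it suffices in turn that $\psi'''(t)=\kappa_3(X_t)\le0$ for $t\ge0$, i.e. that the tilted Beta stays left-skewed as one tilts toward the heavier endpoint. Analytically, I would set $g(\lambda)=\psi'(\lambda)/\lambda$ with $g(0)=v$, solve \cref{eq:beta-riccati} for $\psi''$, and feed it into $g'(\lambda)=(\lambda\psi''-\psi')/\lambda^2$ to get a first-order differential inequality for $\psi'$ in which the coefficient $(2m-1)\lambda$ is nonnegative on $[0,\infty)$ precisely because $m\ge\tfrac12$ — the sign that keeps $g$ from crossing the level $v$ from below. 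In either formulation the cleanest target is to rule out a point $\lambda_\star>0$ at which $u(\lambda):=\tfrac v2\lambda^2-\psi(\lambda)$ has a negative minimum: there $\psi'(\lambda_\star)=v\lambda_\star$, and substituting this back into \cref{eq:beta-riccati} pins down $\psi''(\lambda_\star)=v\bigl(1-(2m-1)\lambda_\star-v\lambda_\star^2\bigr)$, which then has to be reconciled with $u''(\lambda_\star)\ge0$. I expect this last step to be the crux: the bare second-order test is not by itself contradictory, so one must also exploit $u'''(0)=-\kappa_3(X)\ge0$ (valid under $\alpha\ge\beta$), the monotonicity of $g$, and the asymptotically linear growth of $\psi$ at $+\infty$ (forced by $X\in[0,1]$) to eliminate every interior candidate minimizer. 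Once $\psi'(\lambda)\le v\lambda$ is secured, integrating from $0$ gives $\psi(\lambda)\le\tfrac v2\lambda^2$, i.e. the claimed centered moment-generating-function bound.
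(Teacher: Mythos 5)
There is a genuine gap, and you name it yourself: the entire content of the lemma is the inequality $\psi'(\lambda)\le \var{X}\,\lambda$ for $\lambda\ge 0$, and your proposal does not establish it. Both handles you offer reduce to statements that are essentially as hard as the lemma and are left unproven. The probabilistic handle requires $\kappa_3(X_t)\le 0$ for \emph{every} exponential tilt $t\ge 0$; but the tilted law $\propto x^{\alpha-1}(1-x)^{\beta-1}e^{tx}$ is no longer a Beta distribution, so the sign of its third cumulant is not given by the usual skewness formula, and no argument is supplied. The analytic handle fares no better: as you concede, at a hypothetical negative minimizer $\lambda_\star$ of $u(\lambda)=\tfrac{v}{2}\lambda^2-\psi(\lambda)$ the second-order test extracted from your Riccati-type relation is not by itself contradictory, and the auxiliary facts you list ($u'''(0)\ge 0$, monotonicity of $g$, linear growth of $\psi$ at infinity) are not assembled into an argument that actually excludes such a point. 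A plan whose crux is flagged as ``to be reconciled'' is not a proof. (The variance computation and the reduction of the two-sided claim to the symmetric case via $X\mapsto 1-X$ are fine; note that for $\alpha>\beta$ the bound genuinely fails for small $\lambda<0$ because $\tfrac{\kappa_3}{6}\lambda^3>0$ there, so the restriction to $\lambda\ge 0$ is essential, not cosmetic.)

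For contrast, the paper's proof avoids the ODE machinery entirely and closes in a few lines. It invokes an order-two recurrence for the central moments of $\operatorname{Beta}(\alpha,\beta)$ \citep{skorski2023bernstein}, from which one reads off that the normalized central moments $m_p=\E{(X-\E{X})^p}/p!$ satisfy $m_p\le 0$ for odd $p$ and $m_p\le \frac{\var{X}}{p}\,m_{p-2}$ for even $p$ when $\alpha\ge\beta$; iterating gives $m_p\le \var{X}^{p/2}/p!!=(\var{X}/2)^{p/2}/(p/2)!$ for even $p$. Summing the Taylor expansion of the centered MGF termwise against that of $\exp(\lambda^2\var{X}/2)$, and discarding the odd terms (which is where $\lambda\ge 0$ enters), yields the claim. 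If you want to salvage your route, the cleanest path is to prove the tilted-variance bound $\var{X_t}\le\var{X}$ for $t\ge 0$ directly, but you should expect that to require a genuinely new ingredient rather than a sign check on the Riccati coefficients.
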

For completeness, we provide the proof of \cref{lem:mgf-beta} in \cref{sec:additional-lemma}.
\begin{example}[Uniform distribution over sphere $\mathcal{U}(\mathbb{S}^{M-1})$]
  \label{ex:sphere}
  The random variable $\rvz \sim \mathcal{U}(\mathbb{S}^{M-1})$ is obviously unit-norm as by definition of the unit sphere $\mathbb{S}^{M-1}$.
  Also, according to \cref{lem:inner-dist}, the inner product between the random variable $\rvz$ and any unit vector $v \in \mathbb{S}^{M-1}$ follows a Beta distribution, i.e.,
  \[
    \langle \rvz, v \rangle \sim 2 \operatorname{Beta}\left(\frac{M-1}{2}, \frac{M-1}{2}\right)-1.
  \]
  Then, from \cref{lem:mgf-beta}, we could confirm $\rvz \sim \mathcal{U}(\mathbb{S}^{M-1})$ is $\sqrt{1/M}$-sub-Gaussian.
\end{example}
Additionally, we characterize the boundedness on the stochastic processes.
\begin{definition}[Square-bounded process]
  \label{def:x-boundedness}
  For an index set $I$ of the form $\{t \in \mathbb{N}: t \ge t_0 \}$ for some $t_0 \in \mathbb{N}$, the stochastic process $(X_t)_{t \in I}$ is $c$-square-bounded if $X_t^2 \le c$ almost surely for all $t \in I$.
\end{definition}

\subsection{Probability tools for sequential random projection}
In this section, we introduce the first probability tool for addressing sequential random projection, inspired by the challenges of sequential decision-making under uncertainty. The analysis is complicated by the sequential dependence and high-dimensionality of random variables, a consequence of the adaptive nature of sequential decision-making. Our approach leverages a novel and meticulously constructed stopped process that manages the behavior of a sequence of concentration events. By employing the method of mixtures as outlined by \citep{pena2009self} within a self-normalized process framework, we derive a non-asymptotic probability bound in \cref{thm:seq-jl}. This bound represents a non-trivial and significant martingale-based extension of the Johnson–Lindenstrauss (JL) lemma, {marking a novel contribution to the fields of random projection and sequential analysis alike}. Our technical innovation offers a probability statement for sequential random projection that is unparalleled in the literature, potentially sparking independent interest in both domains.

We use short notation for $[n] = \{1, 2, \ldots, n\}$ and $\mathcal{T} = \{ 0, 1, \ldots, T\} = \{ 0 \} \cup [T] $.
\begin{theorem}[Sequential random projection in adaptive process]
  \label{thm:seq-jl}
  Let $\varepsilon \in (0, 1)$ be fixed and  $(\mathcal{F}_{t})_{t\ge 0}$ be a filtration.
  Let $\rvz_0 \in \R^M$ be an $\mathcal{F}_0$-measurable random vector satisfies $\E{\norm{\rvz_0}^2} = 1$ and $\abs{\norm{\rvz}^2 - 1} \le (\varepsilon/2)$.
  Let $(\rvz_t)_{t \ge 1} \subset \R^M$ be a stochastic process adapted to filtration $(\mathcal{F}_{t})_{t \ge 1}$ such that it is $\sqrt{c_0/M}$-sub-Gaussian and each $\rvz_t$ is unit-norm.
  Let $(x_t)_{t\ge 1} \subset \R$ be a stochastic process adapted to filtration $(\mathcal{F}_{t-1})_{t \ge 1}$ such that it is $c_x$-square-bounded. Here, $c_0$ and $c_x$ are absolute constants.

  For any fixed $x_0 \in \R$, if the following condition is satisfied
    \begin{align}
    \label{eq:M-selection}
        M \ge \frac{16 c_0 (1+ \varepsilon)}{\varepsilon^2}  \left(\log \left( \frac{1}{\delta} \right) + \log \left(1 + \frac{c_x T}{x_0^2} \right) \right),
    \end{align}
  we have, with probability at least $1 - \delta$
  \begin{align}
  \label{eq:seq-rand-proj}
    \forall t \in \mathcal{T}, \quad
    (1 - \varepsilon) \left( \sum_{i=0}^t x_i^2 \right) \le \norm{\sum_{i=0}^t x_i \rvz_i}^2 \le (1 + \varepsilon) \left(\sum_{i=0}^t x_i^2 \right).
  \end{align}
\end{theorem}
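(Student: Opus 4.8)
The plan is to reduce the uniform-in-$t$ statement \eqref{eq:seq-rand-proj} to a single concentration event for a self-normalized martingale, and to handle the sequential dependence by a stopping-time construction. First I would fix a sign $\epsilon_\pm$ and work with the partial sums $S_t := \sum_{i=0}^t x_i \rvz_i \in \R^M$ and $V_t := \sum_{i=0}^t x_i^2 \in \R$. Expanding $\norm{S_t}^2 = \sum_{i,j} x_i x_j \langle \rvz_i, \rvz_j\rangle$ and using that each $\rvz_i$ is unit-norm gives $\norm{S_t}^2 = \sum_{i=0}^t x_i^2 + 2\sum_{i=1}^t x_i \langle \rvz_i, S_{i-1}\rangle$, so the "error" is $D_t := \norm{S_t}^2 - V_t = 2\sum_{i=1}^t x_i \langle \rvz_i, S_{i-1}\rangle$. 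Since $x_i$ and $S_{i-1}$ are $\mathcal{F}_{i-1}$-measurable and $\rvz_i$ is $\mathcal{F}_i$-measurable and conditionally $\sqrt{c_0/M}$-sub-Gaussian, the increments $x_i\langle \rvz_i, S_{i-1}\rangle$ form a martingale difference sequence with conditional sub-Gaussian parameter controlled by $\norm{S_{i-1}}$ (taking $v = S_{i-1}/\norm{S_{i-1}}$ in \cref{def:conditional-subgaussion}). This is where $x_0$ being fixed and nonzero matters: it guarantees $\norm{S_0} = |x_0| \norm{\rvz_0} \ge |x_0|\sqrt{1 - \varepsilon/2} > 0$, so the process starts away from the degenerate point and the self-normalization denominator is bounded below.

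Next I would set up the mixture (pseudo-maximization) argument. For each fixed $\lambda \in \R$, the process $\mathcal{E}_t(\lambda) := \exp\!\big(\lambda D_t - \tfrac{1}{2}\lambda^2 \cdot \tfrac{4 c_0}{M}\sum_{i=1}^t x_i^2 \norm{S_{i-1}}^2\big)$ is a supermartingale with $\mathcal{E}_0 \equiv 1$, by the conditional sub-Gaussian bound applied increment by increment. The obstacle is that $\sum_i x_i^2 \norm{S_{i-1}}^2$ is not itself bounded by a multiple of $V_t^2$ in general — it could blow up if $\norm{S_{i-1}}^2$ overshoots $(1+\varepsilon)V_{i-1}$. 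This is exactly the role of the \textbf{stopped process}: define the stopping time
\[
  \tau := \inf\Big\{ t \ge 0 : \ \norm{S_t}^2 \notin \big[(1-\varepsilon)V_t,\ (1+\varepsilon)V_t\big] \Big\},
\]
and run everything on the stopped filtration up to $\tau$. On $\{t < \tau\}$ (equivalently, on the stopped quantities with index $t \wedge \tau$) one has $\norm{S_{i-1}}^2 \le (1+\varepsilon)V_{i-1} \le (1+\varepsilon)V_t$, so the quadratic variation term is dominated: $\sum_{i=1}^{t\wedge\tau} x_i^2 \norm{S_{i-1}}^2 \le (1+\varepsilon) V_{t\wedge\tau}\sum_{i=1}^{t\wedge\tau} x_i^2 \le (1+\varepsilon) V_{t\wedge\tau}^2$. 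The stopped process $\mathcal{E}_{t\wedge\tau}(\lambda)$ remains a supermartingale by optional stopping, so $\E{\mathcal{E}_{t\wedge\tau}(\lambda)} \le 1$.

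Then I would apply the method of mixtures: integrate $\mathcal{E}_{t\wedge\tau}(\lambda)$ against a centered Gaussian prior $\lambda \sim \mathcal{N}(0, \rho^{-1})$ with $\rho \propto x_0^2$ (the natural scale from $V_0 = x_0^2$), obtaining a nonnegative supermartingale $\bar{\mathcal{E}}_{t\wedge\tau}$ with $\E{\bar{\mathcal{E}}_{t\wedge\tau}} \le 1$; the Gaussian integral is explicit and yields $\bar{\mathcal{E}}_{t\wedge\tau} = \big(\tfrac{\rho}{\rho + W_{t\wedge\tau}}\big)^{1/2}\exp\!\big(\tfrac{D_{t\wedge\tau}^2}{2(\rho + W_{t\wedge\tau})}\big)$ where $W_t := \tfrac{4c_0}{M}\sum_{i=1}^{t} x_i^2\norm{S_{i-1}}^2$. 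By Ville's maximal inequality for nonnegative supermartingales, $\prob\big(\exists t:\ \bar{\mathcal{E}}_{t\wedge\tau} \ge 1/\delta\big) \le \delta$. Unpacking this and using $W_{t\wedge\tau} \le \tfrac{4c_0(1+\varepsilon)}{M} V_{t\wedge\tau}^2$ and $\rho + W_{t\wedge\tau} \le (\rho \vee 1)(1 + \tfrac{c_x T}{x_0^2}) \cdot(\text{const})$ (crudely bounding $V_{t\wedge\tau} \le V_T \le x_0^2 + c_x T$), one gets on the complementary event that for all $t$,
\[
  D_{t\wedge\tau}^2 \le \frac{2(1+\varepsilon) \cdot \tfrac{4c_0}{M} V_{t\wedge\tau}^2}{M}\cdot\Big(\log\tfrac1\delta + \log\big(1 + \tfrac{c_x T}{x_0^2}\big)\Big) \cdot M,
\]
schematically; choosing $M$ as in \eqref{eq:M-selection} forces $|D_{t\wedge\tau}| \le \varepsilon V_{t\wedge\tau}$ for all $t$. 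Finally I would argue that this self-improving bound implies $\tau = \infty$ almost surely on the good event: if $\tau$ were finite, then at $t = \tau$ we would have both $\norm{S_\tau}^2 \notin [(1-\varepsilon)V_\tau,(1+\varepsilon)V_\tau]$ (definition of $\tau$) and $|D_\tau| = |D_{\tau\wedge\tau}| \le \varepsilon V_\tau$ (the bound just derived), a contradiction — modulo the usual care that the first exit is "just barely" out, which is handled because $\varepsilon$ in the exponent bound can be taken strictly smaller than the $\varepsilon$ defining $\tau$ by the slack built into \eqref{eq:M-selection} (the factor $16$ versus $8$). Hence on an event of probability at least $1 - \delta$, $\tau = \infty$ and \eqref{eq:seq-rand-proj} holds for all $t \in \mathcal{T}$. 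Taking a union bound over the two signs (or absorbing it into $\delta$) completes the proof.

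The main obstacle, and the conceptual heart of the argument, is the \emph{circularity} between the event we want to control ($\norm{S_t}^2 \approx V_t$) and the quadratic-variation term $W_t$ that governs the concentration (which itself involves $\norm{S_{i-1}}^2$): without a priori control of $\norm{S_{i-1}}$, the martingale has unbounded predictable variation. The stopped-process construction is precisely what breaks this circularity — it lets us assume the good event holds up to time $t\wedge\tau$ "for free," derive a strictly better bound, and then close the loop by showing the exit time is never reached. Getting the constants to line up (so that the derived bound on $|D|$ is strictly inside the exit threshold) is the routine-but-delicate bookkeeping, and the choice of the Gaussian mixture scale $\rho \asymp x_0^2$ is what produces the clean $\log(1 + c_x T/x_0^2)$ term in \eqref{eq:M-selection}.
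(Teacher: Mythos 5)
Your proposal is correct and follows essentially the same route as the paper: the identical martingale-difference decomposition $\norm{S_t}^2 - V_t = 2\sum_{i=1}^t x_i \langle \rvz_i, S_{i-1}\rangle$ exploiting the unit-norm assumption, the same stopping-time/stopped-process device to break the circularity between the good event and the predictable variation, the same Gaussian-mixture self-normalized bound with prior scale tied to $x_0$, and the same trigger-lemma contradiction to close the loop. The only cosmetic difference is that your "union bound over two signs" is unnecessary — the symmetric Gaussian mixture over all $\lambda \in \R$ already yields the two-sided bound on $|D_{t\wedge\tau}|$ directly, which is how the paper's any-time self-normalized theorem is stated.
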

\begin{remark}
  We say this is an ``sequential random projection'' argument because one can relate \cref{thm:seq-jl} to the traditional random projection setting where $\Pi_t = (\rvz_0, \ldots, \rvz_t) \in \R^{M \times t+1}$ is a random projection matrix and $\vx_t = (x_0, \ldots, x_t)^\top \in \R^{t+1}$ is the vector to be projected.
    The argument in \cref{eq:seq-rand-proj} translates to
    \begin{align}
    \label{eq:seq-rand-proj-2}
        \forall t \in \mathcal{T}, \quad (1-\varepsilon) \norm{\vx_t}^2 \le \norm{ \Pi_t \vx_t }^2 \le (1+ \varepsilon) \norm{\vx_t}^2.
    \end{align}
    When assuming independence between $\vx_t$ and $\Pi_t$ for all $t \in \mathcal{T}$, by simply applying union bound over time index $t \in \mathcal{T}$ with existing JL analysis, we can derive that the required dimension $M = O( \varepsilon^{-2} \log (T/\delta))$ is of the same order in \cref{eq:M-selection}.
    \textbf{However}, as discussed in \cref{sec:intro}, existing JL analytical techniques are \textit{not} able to handle the sequential dependence in our setup as $\vx_t$ is statistically dependent with $\Pi_t$ for $t \in \mathcal{T}$. 
  Therefore, \cref{thm:seq-jl} is also an innovation in the literature of both random projection and sequential analysis.
\end{remark}
\begin{remark}
  The unit-norm condition in the \cref{thm:seq-jl} can be removed. Then, more distribution of random vectors can be covered in our analytical framework.
  For example, the Gaussian random vector $\rvz \sim N(0, \frac{1}{M} I)$ is not unit-norm but we could rely on the centered moment generating function of $\norm{\rvz}^2$ by exploiting the properties of Chi-square distribution.
  We leave it for the future work.
\end{remark}
\begin{example}[Stylized stochastic process satisfying the condition in \cref{thm:seq-jl}.]
  \label{asmp:process-1}
  Let $(\rvz_{t})_{t \ge 0}$ are mutually independent random variables, each sampled from $\mathcal{U}(\mathbb{S}^{M-1})$. Let $x_0$ be fixed and $(x_t)_{t \ge 1}$ be the stochastic process with the following dependence structure, interleaved with the process $(\rvz_{t})_{t \ge 0}$:
  \begin{itemize}
    \item $x_t$ is dependent on $x_0, \rvz_0, x_1, \rvz_1, \ldots, x_{t-1}, \rvz_{t-1}$ as described in \cref{eq:sequential-dependence}.
    \item $\rvz_t$ is independent of $x_0, \rvz_0, x_1, \rvz_1, \ldots, x_{t-1}, \rvz_{t-1}, x_t $
  \end{itemize}
  This sequential dependence structure is also described in \cref{fig:dependence-graph}. 
  Define the filtration $(\mathcal{F}_t)_{t \ge 0}$ where $\mathcal{F}_{t} = \sigma(\rvz_0, x_1, \rvz_1, \ldots, x_t, \rvz_t, x_{t+1} )$.
  From \cref{ex:sphere}, we notice the process $(\rvz_t)_{t\ge 1}$ adapted to $(\mathcal{F}_t)_{t\ge 1}$  is $\sqrt{1/M}$-sub-Gaussian and unit-norm.
\end{example}

\section{Technical ideas \& details}
\label{sec:tech}
Before digging into the proof, we identify some important sequential structure and also clarity our proof idea in a intuitive level.
For each time $t \in \mathcal{T}$, let the short notation for the centered variable be
\[
  Y_t = \norm{ \sum_{i=0}^{t} x_i \rvz_i}^2 - \left( \sum_{i=0}^t x_i^2 \right)~~~~\text{and}~~~~S_t = \sum_{i=0}^t x_i^2.
\]
Our \emph{key observation} is that for any $t \in [T]$
\begin{align}
\label{eq:recursion}
  \norm{ \sum_{i=0}^{t} x_i \rvz_i}^2
   & = \norm{ \sum_{i=0}^{t-1} x_i \rvz_i + x_{t} \rvz_{t}}^2                                            \nonumber                                        \\
   & = \norm{ \sum_{i=0}^{t-1} x_i \rvz_i }^2 + 2 \left( \sum_{i=0}^{t-1} x_i \rvz_i \right)^\top  x_{t} \rvz_{t} + x_t^2 \norm{\rvz_t}^2
\end{align}
and thus we have the following relationship between $Y_{t}$ and $Y_{t-1}$ derived from \cref{eq:recursion},
\[
  Y_t - Y_{t-1} = 2 x_{t} \rvz_{t}^\top (\sum_{i=0}^{t-1} x_i \rvz_i ) + x_t^2 \left(\norm{\rvz_t}^2 -1 \right).
\]
Since $\rvz_t$ is unit-norm, we can further simplify the exposition
\begin{align}
  \label{eq:martingale-difference}
  Y_t - Y_{t-1} = 2 x_{t} \rvz_{t}^\top ( \sum_{i=0}^{t-1} x_i \rvz_i ).
\end{align}
Another key observation is that the difference term in \cref{eq:martingale-difference} is a function of on the $( \sum_{i=0}^{t-1} x_i \rvz_i )$ that is $\mathcal{F}_{t-1}$-measurable.
This implies, the difference term $(Y_{t} - Y_{t-1})$ can be controlled according to information in the history-dependent term $( \sum_{i=0}^{t-1} x_i \rvz_i ) = Y_{t-1} + S_{t-1}$. Intuitively, once the concentration behavior is bad, i.e., $Y_{t-1}$ has large deviation, it is highly possible to exhibit large deviation for $Y_{t'}$ in the later time index $t' \ge t$.

\subsection{Stopped process and exponential supermartingale}

To mathematically formalize this intuition, we introduce a definition of good event and stopping time for analysis.
\begin{definition}[Good event]
  \label{def:good-event}
  For each time $t \in \mathcal{T}$, we introduce the good event $E_t$ under which the strongly concentration behavior is guaranteed, suppose $\varepsilon \in (0, 1)$,
  \begin{align}
  \label{eq:good-event}
    E_t(\varepsilon) = \left\{ (1 - \varepsilon) \left( \sum_{i=0}^{t} x_i^2 \right) \le \norm{ \sum_{i=0}^t x_i \rvz_i}^2 \le (1 + \varepsilon) \left( \sum_{i=0}^{t} x_i^2 \right) \right\}.
  \end{align}
  With short notation,
  \begin{align*}
    E_t(\varepsilon) = \left\{ \abs{ Y_{t} } \le \varepsilon S_t \right\}.
  \end{align*}
\end{definition}
We also define the stopping time as the first time the bad event happens, i.e. the good event $E_t(\varepsilon)$ defined in \cref{eq:good-event} violates.
\begin{definition}[Stopping time]
  \label{def:stopping-time}
  For any fixed $\varepsilon$, we define the stopping time
  \begin{align}
  \label{eq:stopping-time}
      \tau(\varepsilon) = \min\{ t \in \mathcal{T}: \neg E_t(\varepsilon) \}.
  \end{align}
\end{definition}
Based on the stopping time, we construct a \newterm{stopped process}.
For $t \in [T]$, define the stopped difference term
\begin{align}
  \label{eq:def_X_t}
  X_{t}^{\tau} = (Y_t - Y_{t-1})  \1{t \le \tau}
\end{align}
such that the process $(X_t^{\tau})_{t\ge 1}$ is adapted to the filtration $(\mathcal{F}_{t})_{t\ge 1}$.

\begin{claim}
  \label{claim:supermartingale}
  Let $\tau$ be the stopping time $\tau(\varepsilon)$ defined in \cref{eq:stopping-time}.
  Let $(X^{\tau}_t)_{t\ge 1}$ be the stochastic process defined in \cref{eq:def_X_t} which is adapted to the filtration $(\mathcal{F}_{t})_{t\ge 1}$.
  Let $A^{\tau}_t=\sum_{i=1}^t X^{\tau}_i$.
  Further denote $(B^\tau_t)^2= \sum_{i=1}^t (C_i^\tau)^2 $ with
  \[
    (C_t^{\tau})^2 := \frac{4c_0}{M} x_t^2 (1+ \varepsilon)S_{t-1} \1{t \le \tau}.
  \]
  If the $(\mathcal{F}_{t})_{t\ge 1}$-adapted process $(\rvz_t)_{t\ge 1}$ is $\sqrt{{c_0}/{M}}$-sub-Gaussian and each $\rvz_t$ is unit-norm, then for any fixed $\lambda \in \R$
  \begin{align*}
    \left\{ M^{\tau}_{t}(\lambda) = \exp \left( \lambda A^{\tau}_t - \frac{\lambda^2}{2} (B^\tau_t)^2 \right)
    , \mathcal{F}_t, t\ge 1 \right\}
  \end{align*}
  is a \textbf{supermartingale} with mean $\le 1$.
\end{claim}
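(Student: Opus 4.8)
The plan is to verify the defining supermartingale inequality $\E{M_t^\tau(\lambda) \mid \mathcal F_{t-1}} \le M_{t-1}^\tau(\lambda)$ for each $t \ge 1$ (with the convention $M_0^\tau(\lambda)=1$), and conclude the mean bound by iterating. Since $M_t^\tau(\lambda) = M_{t-1}^\tau(\lambda)\cdot \exp\bigl(\lambda X_t^\tau - \tfrac{\lambda^2}{2}(C_t^\tau)^2\bigr)$ and $M_{t-1}^\tau(\lambda)$ is $\mathcal F_{t-1}$-measurable, it suffices to show
\[
  \E{\exp\bigl(\lambda X_t^\tau - \tfrac{\lambda^2}{2}(C_t^\tau)^2\bigr) \mid \mathcal F_{t-1}} \le 1
  \quad\text{a.s.}
\]
First I would observe that the event $\{t \le \tau\}$ is $\mathcal F_{t-1}$-measurable (since $\tau$ is a stopping time, $\{\tau \le t-1\} \in \mathcal F_{t-1}$). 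On $\{t > \tau\}$ both $X_t^\tau$ and $(C_t^\tau)^2$ vanish, so the conditional expectation is exactly $1$; thus I only need to handle the event $\{t \le \tau\} \in \mathcal F_{t-1}$, on which $X_t^\tau = Y_t - Y_{t-1} = 2x_t\,\rvz_t^\top\bigl(\sum_{i=0}^{t-1}x_i\rvz_i\bigr)$ by \cref{eq:martingale-difference}.

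Next, working on $\{t \le \tau\}$, I would write $v_{t-1} := \sum_{i=0}^{t-1} x_i \rvz_i$, which is $\mathcal F_{t-1}$-measurable, so conditionally on $\mathcal F_{t-1}$ the quantity $X_t^\tau = 2x_t\,\langle \rvz_t, v_{t-1}\rangle$ is $2|x_t|\,\|v_{t-1}\|$ times the scalarization $\langle \rvz_t, v_{t-1}/\|v_{t-1}\|\rangle$ of the $\sqrt{c_0/M}$-sub-Gaussian vector $\rvz_t$ along a fixed (given $\mathcal F_{t-1}$) unit direction. Hence $X_t^\tau$ is, conditionally on $\mathcal F_{t-1}$, a $2|x_t|\,\|v_{t-1}\|\sqrt{c_0/M}$-sub-Gaussian scalar, giving
\[
  \E{\exp(\lambda X_t^\tau) \mid \mathcal F_{t-1}} \le \exp\Bigl(\tfrac{\lambda^2}{2}\cdot \tfrac{4c_0}{M}x_t^2\,\|v_{t-1}\|^2\Bigr)
  \quad\text{on } \{t\le\tau\}.
\]
Then I would bound $\|v_{t-1}\|^2 = \|\sum_{i=0}^{t-1}x_i\rvz_i\|^2 = Y_{t-1} + S_{t-1}$, and here the stopped construction does its job: on $\{t \le \tau\}$ we have $t-1 < \tau$, i.e. the good event $E_{t-1}(\varepsilon)$ holds, so $|Y_{t-1}| \le \varepsilon S_{t-1}$ and therefore $\|v_{t-1}\|^2 \le (1+\varepsilon)S_{t-1}$. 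Substituting, the exponent is at most $\tfrac{\lambda^2}{2}\cdot\tfrac{4c_0}{M}x_t^2(1+\varepsilon)S_{t-1} = \tfrac{\lambda^2}{2}(C_t^\tau)^2$ on $\{t\le\tau\}$, so $\E{\exp(\lambda X_t^\tau)\mid\mathcal F_{t-1}} \le \exp\bigl(\tfrac{\lambda^2}{2}(C_t^\tau)^2\bigr)$, which is exactly the required inequality after multiplying by the $\mathcal F_{t-1}$-measurable factor $\exp(-\tfrac{\lambda^2}{2}(C_t^\tau)^2)$. Finally, taking total expectations and iterating gives $\E{M_t^\tau(\lambda)} \le \E{M_{t-1}^\tau(\lambda)} \le \cdots \le \E{M_0^\tau(\lambda)} = 1$.

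The main obstacle — really the only subtle point — is the careful bookkeeping around the stopped process: one must argue that $\{t \le \tau\}$ is $\mathcal F_{t-1}$-measurable so that the indicator can be pulled out of the conditional expectation, and that on this event the previous-step good event $E_{t-1}(\varepsilon)$ genuinely holds so that $\|v_{t-1}\|^2 \le (1+\varepsilon)S_{t-1}$. A minor point to state cleanly is the conditional sub-Gaussianity of $\langle \rvz_t, v_{t-1}\rangle$ given $\mathcal F_{t-1}$: since $v_{t-1}/\|v_{t-1}\|$ is a fixed unit vector once we condition on $\mathcal F_{t-1}$, this follows directly from \cref{def:conditional-subgaussion} applied to the adapted process $(\rvz_t)_{t\ge1}$, with the degenerate case $v_{t-1}=0$ handled trivially (then $X_t^\tau=0$). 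Everything else is routine.
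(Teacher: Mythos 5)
Your proposal is correct and follows essentially the same route as the paper's proof: you exploit the $\mathcal{F}_{t-1}$-measurability of $\1{t\le\tau}$ and of $\sum_{i=0}^{t-1}x_i\rvz_i$, apply the conditional sub-Gaussian property of $\rvz_t$ along that direction, and use the stopping-time construction to bound $\norm{\sum_{i=0}^{t-1}x_i\rvz_i}^2$ by $(1+\varepsilon)S_{t-1}$ on $\{t\le\tau\}$, exactly as in \cref{eq:exponential-md}. Your explicit handling of the event $\{t>\tau\}$ and the degenerate case $v_{t-1}=0$ is a slightly more careful bookkeeping than the paper's, but the argument is the same.
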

\begin{proof}[Proof of \cref{claim:supermartingale}]
  Note $\1{t \le \tau} = 1 - \1{\tau \le t-1}$ is $\mathcal{F}_{t-1}$-measurable. Thus, the random vector $$(\sum_{i=0}^{t-1} x_i \rvz_i) \1{t \le \tau} x_t \quad \text{is $\mathcal{F}_{t-1}$-measurable} .$$
  By the condition that the process $(\rvz_t)_{t\ge 1}$ is $\sqrt{c_0/M}$-sub-Gaussian, we conclude
  from the definition of conditionally sub-Gaussian in \cref{def:conditional-subgaussion},
  \begin{align}
  \label{eq:exponential-md}
    \E{ \exp( \lambda X^{\tau}_t ) \mid \mathcal{F}_{t-1}}
     & = \E{ \exp( 2\lambda x_t \langle \rvz_t,\sum_{i=0}^{t-1} x_i \rvz_i \rangle \1{t \le \tau}  ) \mid \mathcal{F}_{t-1}} \nonumber \\
     & \le \exp\left( \frac{\lambda^2}{2} (4 c_0/ M) x_t^2 \norm{\sum_{i=0}^{t-1} x_i \rvz_i }^2 \1{t \le \tau}  \right)   \nonumber   \\
     & \le \exp\left( \frac{\lambda^2}{2} (4 c_0/ M) x_t^2 (1+ \varepsilon)S_{t-1} \1{t \le \tau}  \right)       \nonumber                    \\
     & = \exp\left( \frac{\lambda^2}{2} (C_t^{\tau})^2 \right)
  \end{align}
  where the last inequality is because of the stopping time argument.
  Thus, the claim holds as
  \begin{align*}
      \E{ M_{t}^\tau (\lambda) \mid \mathcal{F}_{t-1} } = M_{t-1}^\tau(\lambda) \E{ \exp(\lambda X^\tau_t - \frac{\lambda^2}{2} (C^\tau_t)^2 ) \mid \mathcal{F}_{t-1} } \le M_{t-1}^\tau(\lambda),
  \end{align*}
  where the inequality is due to \cref{eq:exponential-md}.
\end{proof}

The following \citet{pena2009self}-type self-normalized bound would be useful to prove our main theoretical contribution of sequential random projection in \cref{thm:seq-jl}.
\begin{theorem}[Any-time self-normalized concentration bound]

  \label{thm:anytime-bound}
  Let $(\mathcal{F}_t)_{t\ge 0}$ be a filtration and  $\{ (A_t, B_t), t \ge 1 \}$ be a sequence of pairs of random variables satisfying
  that for all $\lambda \in \R$
  \begin{align*}
    \left\{ \exp\left(\lambda A_t - \frac{\lambda^2}{2} B_t^2 \right), \mathcal{F}_t, t \ge 1 \right\} ~\text{is a supermartingale with mean $\le 1$.}
  \end{align*}
  Then, for any fixed positive sequence $\left(L_t\right)_{t\ge 1}$, with probability at least $1-\delta$
  \begin{align*}
    \forall t \geq 1, \quad \abs{A_t} \leq \sqrt{2\left(B^2_t+L_t\right) \log \left(\frac{1}{\delta} \frac{\left(B^2_t+L_t\right)^{1 / 2}}{L_t^{1 / 2}}\right)}
  \end{align*}
\end{theorem}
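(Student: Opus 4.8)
The plan is to establish \cref{thm:anytime-bound} by the \emph{method of mixtures}: average the one-parameter family of supermartingales $M_t(\lambda)=\exp(\lambda A_t-\tfrac{\lambda^2}{2}B_t^2)$ over $\lambda$ against a centered Gaussian prior, obtaining a single nonnegative supermartingale whose time-$t$ value is an explicit function of $(A_t,B_t)$, and then apply Ville's maximal inequality. Fix the positive level $L$ (the role played by $L_t$ in the statement) and let $dF(\lambda)=\sqrt{L/(2\pi)}\,e^{-L\lambda^2/2}\,d\lambda$ be the $N(0,1/L)$ density. Set
\[
  \overline M_t \;=\; \int_{\R} M_t(\lambda)\,dF(\lambda).
\]

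First I would check that $(\overline M_t)_{t\ge 1}$ is a nonnegative supermartingale with $\E{\overline M_1}\le 1$. Nonnegativity is immediate, and $\overline M_t$ is $\mathcal{F}_t$-measurable once one records that $(\omega,\lambda)\mapsto M_t(\lambda)$ is jointly measurable. For the supermartingale property, Tonelli's theorem (applicable since the integrand is nonnegative) permits interchanging the conditional expectation with the $\lambda$-integral:
\[
  \E{\overline M_t\mid \mathcal{F}_{t-1}} \;=\; \int_{\R}\E{M_t(\lambda)\mid \mathcal{F}_{t-1}}\,dF(\lambda)\;\le\;\int_{\R} M_{t-1}(\lambda)\,dF(\lambda)\;=\;\overline M_{t-1},
\]
using the hypothesis that each $M_t(\lambda)$ is a supermartingale; the same interchange gives $\E{\overline M_1}=\int_{\R}\E{M_1(\lambda)}\,dF(\lambda)\le 1$ from the mean-$\le 1$ assumption.

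Next I would evaluate the Gaussian integral in closed form. Completing the square in $\lambda$ yields
\[
  \overline M_t \;=\; \sqrt{\frac{L}{B_t^2+L}}\;\exp\!\left(\frac{A_t^2}{2\,(B_t^2+L)}\right).
\]
Ville's inequality for the nonnegative supermartingale $(\overline M_t)_{t\ge 1}$ then gives $\prob\big(\exists\,t\ge 1:\ \overline M_t\ge 1/\delta\big)\le \delta\,\E{\overline M_1}\le \delta$. On the complementary event, of probability at least $1-\delta$, we have $\overline M_t<1/\delta$ for all $t\ge 1$ simultaneously; substituting the closed form, taking logarithms, and solving for $\abs{A_t}$ produces exactly $\abs{A_t}\le \sqrt{2(B_t^2+L)\log\!\big(\frac{1}{\delta}\frac{(B_t^2+L)^{1/2}}{L^{1/2}}\big)}$, i.e.\ the claimed bound with $L_t=L$.

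I expect the only genuinely delicate part to be the measure-theoretic bookkeeping around $\overline M_t$: establishing joint measurability of the integrand in $(\omega,\lambda)$ so that $\overline M_t$ is a bona fide random variable, justifying the Tonelli interchange in the supermartingale verification, and invoking Ville's inequality in its discrete-time form with the correct normalization $\E{\overline M_1}\le 1$. The Gaussian computation and the final rearrangement are routine. One minor point worth a remark: a single Gaussian mixture produces one fixed level $L$, so the statement's reference to an arbitrary positive sequence $(L_t)_{t\ge1}$ should be read as ``the displayed inequality holds for every fixed constant $L>0$'', and it is this free parameter (re-labelled $L_t$) that is later optimized in the proof of \cref{thm:seq-jl}.
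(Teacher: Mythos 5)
Your proposal is correct and follows essentially the same route as the paper: a Gaussian mixture of the exponential supermartingales, the identical closed-form evaluation by completing the square, and a uniform-in-$t$ bound obtained via Ville's inequality, which is just the packaged form of the paper's stopping-time-plus-Markov argument. Your closing remark is also apt: the single-mixture argument genuinely delivers the bound only for a fixed constant level $L$ rather than an arbitrary sequence $(L_t)_{t\ge1}$, and a constant level is indeed all that is used downstream in the proof of \cref{thm:seq-jl}.
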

The proof of \cref{thm:anytime-bound} can be found in \cref{sec:proof-anytime-bound}.

We also need the following trigger lemma for the initial preparation of the proof in \cref{thm:seq-jl}.
\begin{lemma}[Trigger lemma]
  \label{lem:trigger}
  For any sequence of event $(\mathcal{E}_t, t \in \mathcal{T})$, define the stopping time $\tau$ as the first time $t$ the event $\mathcal{E}_t$ is violated, i.e.
  \[
    \tau = \min \{ t \in \mathcal{T} : \neg \mathcal{E}_t \}.
  \]
  Then, the following equality holds for all $t \in \mathcal{T}$,
  \begin{align*}
    \{ \tau \le t \} = \neg \mathcal{E}_{t \wedge \tau}.
  \end{align*}
\end{lemma}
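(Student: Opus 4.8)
The plan is to prove this as a purely pathwise (deterministic) identity: fix an outcome $\omega \in \Omega$, write $\tau = \tau(\omega)$ for its realized value, and verify that $\omega$ lies in the left-hand event exactly when it lies in the right-hand one. The point to keep in mind is that the index $t \wedge \tau$ is itself random, so $\neg \mathcal{E}_{t\wedge\tau}$ denotes the event $\{\omega : \omega \in \neg\mathcal{E}_{(t\wedge\tau)(\omega)}\}$, and the whole argument consists of unwinding this definition. Throughout I would record the two defining properties of the first-violation time $\tau = \min\{s \in \mathcal{T} : \neg\mathcal{E}_s\}$: (i) for every $s \in \mathcal{T}$ with $s < \tau$ the event $\mathcal{E}_s$ holds at $\omega$; and (ii) whenever the minimum is attained in $\mathcal{T}$ (some event is violated) the event $\mathcal{E}_\tau$ fails at $\omega$. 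Both are immediate from the definition of a minimum, but stating them explicitly organizes the case analysis.

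First I would establish the inclusion $\{\tau \le t\} \subseteq \neg\mathcal{E}_{t\wedge\tau}$. On the event $\{\tau \le t\}$ one has $t \wedge \tau = \tau$, and because $0 \le \tau \le t \le T$ the minimum is attained in $\mathcal{T}$, so property (ii) gives $\omega \in \neg\mathcal{E}_\tau = \neg\mathcal{E}_{t\wedge\tau}$. For the reverse inclusion I would argue the contrapositive, namely $\{\tau > t\} \subseteq \mathcal{E}_{t\wedge\tau}$: on $\{\tau > t\}$ one has $t\wedge\tau = t$ with $t < \tau$, so property (i) yields $\omega \in \mathcal{E}_t = \mathcal{E}_{t\wedge\tau}$. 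Since for each $\omega$ exactly one of $\mathcal{E}_{t\wedge\tau}$ and $\neg\mathcal{E}_{t\wedge\tau}$ holds, these two events partition $\Omega$, so taking complements of $\{\tau > t\} \subseteq \mathcal{E}_{t\wedge\tau}$ gives $\neg\mathcal{E}_{t\wedge\tau} \subseteq \{\tau \le t\}$. Combining the two inclusions yields the claimed set equality for every $t \in \mathcal{T}$.

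The only genuine subtlety---and the step I would be most careful about---is the convention for $\min\emptyset$, i.e.\ the outcomes on which no event is ever violated throughout $\mathcal{T}$. On those outcomes $\tau$ is set to $+\infty$ (or any value exceeding $T$), so such $\omega$ always fall in the $\{\tau > t\}$ branch for every $t \in \mathcal{T}$; there $t \wedge \tau = t < \tau$, and property (i) applies, confirming $\omega \in \mathcal{E}_t = \mathcal{E}_{t\wedge\tau}$ and hence $\omega \notin \{\tau \le t\}$, consistent with both sides excluding $\omega$. Treating this boundary case coherently inside the two-branch partition $\{\tau \le t\} \cup \{\tau > t\} = \Omega$ is what makes the argument airtight. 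Once it is handled, the identity follows from the elementary observations above with no probabilistic machinery, since it is a deterministic statement about the sequence of events rather than about their probabilities.
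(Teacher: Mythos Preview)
Your proof is correct. The paper states this lemma without proof, treating it as an elementary fact, so there is no argument to compare against; your pathwise two-inclusion argument (splitting on $\{\tau \le t\}$ versus $\{\tau > t\}$ and invoking the defining minimality of $\tau$) is the natural way to verify it, and your handling of the $\min\emptyset$ convention is appropriate.
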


\subsection{Proof of \cref{thm:seq-jl}}
Now we are ready to provide the details of the proof, completing the intuition and mathematical construction.
\begin{proof}[Proof of \cref{thm:seq-jl}]
  We apply \cref{lem:trigger} for $\mathcal{E}_t = E_t(\varepsilon)$ and it follows
  \begin{align}
    \label{eq:apply-trigger}
    \prob\left( \exists t \in \mathcal{T}, \neg E_t(\varepsilon) \right)
     & = \prob(\tau \le T) \nonumber \\
     & = \prob \left( \neg E_{T \wedge \tau}(\varepsilon) \right)  \nonumber                            \\
     & = \prob \left( \abs{Y_{T \wedge \tau }} \ge \varepsilon S_{T \wedge \tau} \right)       \nonumber                    \\
     & = \prob \left( \abs{Y_0 + \sum_{t=1}^{T} (Y_{t} - Y_{t-1}) \1{t \le \tau}} \ge \varepsilon S_{T \wedge \tau} \right)
  \end{align}
  By the construction of stopped process
  $Y_{T \wedge \tau} - Y_0 = \sum_{t=1}^T X^{\tau}_t = A^{\tau}_T$.
  Then, \emph{our goal}, from \cref{eq:apply-trigger}, becomes to upper bound the RHS of \cref{eq:final-bound},
  \begin{align}
    \label{eq:final-bound}
    \prob\left( \exists t \in \mathcal{T}, (\neg E_t) \right)
     & = \prob \left(\abs{Y_0 + A^{\tau}_T} \ge \varepsilon S_{T \wedge \tau} \right)
  \end{align}
  By \cref{claim:supermartingale}, the pair of processes $(A^\tau_t, B^\tau_t)_{t\ge 1}$ with
  $$A^\tau_t = \sum_{i=1}^t X^\tau_i =
    \sum_{i=1}^t (Y_t - Y_{t-1})\1{t \le \tau}$$
  and $$(B^\tau_t)^2 = \sum_{i=1}^t (4c_0/M) x_t^2 (1+\varepsilon) S_{t-1} \1{t \le \tau}$$
  satisfies the conditions in \cref{thm:anytime-bound}.
  Then applying the \cref{thm:anytime-bound} on the pair of processes $(A^\tau_t, B^\tau_t)_{t\ge 1}$ yields that,
  with probability at least $1- \delta$,
  \begin{align*}
    \forall t \ge 1, \abs{A^{\tau}_t} \le
    \sqrt{2\left((B^\tau_t)^2 + L_t\right) \log \left(\frac{1}{\delta} \frac{\left((B^\tau_t)^2 + L_t \right)^{1 / 2}}{L_t^{1 / 2}}\right)}
  \end{align*}
  Since by the condition in \cref{thm:seq-jl}, we have $\abs{Y_0}\le (\varepsilon/2) x_0^2$.
  Now we want to argue that for any fixed $\varepsilon \in (0, 1)$, with suitable choice of $L_T$ and $M$, we have with probability at least $1 - \delta$
  \begin{align}
    \label{eq:final-goal}
    \abs{Y_{0} + A^\tau_T} \le
    \underbrace{\sqrt{2\left((B^\tau_T)^2 +L_T\right) \log \left(\frac{1}{\delta} \frac{\left((B^\tau_T)^2 +L_T\right)^{1 / 2}}{L_T^{1 / 2}}\right)} + (\varepsilon/2) x_0^2}_{(I)}
    \le \varepsilon S_{T \wedge \tau}.
  \end{align}
  \begin{claim}
    \label{claim:config-L-M}
    The following configuration suffices for \cref{eq:final-goal}:
    \begin{align*}
      L_T \le \frac{2 c_0 (1+ \varepsilon) x_0^4}{M} \quad \text{and} \quad
      M \ge  (16 c_0 (1+ \varepsilon)/\varepsilon^2)  \left(\log \left( \frac{1}{\delta} \right) + \log \left(1 + \frac{c_x T}{x_0^2} \right) \right).
    \end{align*}
  \end{claim}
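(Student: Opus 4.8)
The claim is a deterministic statement: by the point where it is invoked, \cref{thm:anytime-bound} has already been applied to the pair $(A^\tau_t, B^\tau_t)_{t\ge 1}$, so what remains is to check, pathwise, that the quantity $(I)$ in \cref{eq:final-goal} is bounded by $\varepsilon S_{T\wedge\tau}$ once $L_T$ and $M$ are fixed. The plan is to take $L_T := \frac{2c_0(1+\varepsilon)x_0^4}{M}$ (the largest value the claim admits) and then argue as follows. Since $S_{T\wedge\tau}\ge S_0 = x_0^2$, the additive term $\tfrac{\varepsilon}{2}x_0^2$ appearing in $(I)$ is at most $\tfrac{\varepsilon}{2}S_{T\wedge\tau}$, so it suffices to show
\begin{align*}
  2\big((B^\tau_T)^2 + L_T\big)\log\!\left(\frac{1}{\delta}\sqrt{\frac{(B^\tau_T)^2 + L_T}{L_T}}\right)\le \frac{\varepsilon^2}{4}\,S_{T\wedge\tau}^2 .
\end{align*}

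The heart of the argument is a telescoping (integral-comparison) estimate for $(B^\tau_T)^2 = \frac{4c_0(1+\varepsilon)}{M}\sum_{t=1}^{T\wedge\tau} x_t^2\, S_{t-1}$. Because $x_t^2 = S_t - S_{t-1}$ and $0\le S_{t-1}\le S_t$, each summand obeys $(S_t - S_{t-1})S_{t-1}\le \tfrac12\big(S_t^2 - S_{t-1}^2\big)$, and summing over $t=1,\dots,T\wedge\tau$ telescopes to $\sum_{t=1}^{T\wedge\tau} x_t^2 S_{t-1}\le \tfrac12\big(S_{T\wedge\tau}^2 - x_0^4\big)$. Hence $(B^\tau_T)^2\le \frac{2c_0(1+\varepsilon)}{M}\big(S_{T\wedge\tau}^2 - x_0^4\big)$, and the choice of $L_T$ is exactly what cancels the $x_0^4$:
\begin{align*}
  (B^\tau_T)^2 + L_T \le \frac{2c_0(1+\varepsilon)}{M}\,S_{T\wedge\tau}^2 , \qquad \frac{(B^\tau_T)^2 + L_T}{L_T}\le \frac{S_{T\wedge\tau}^2}{x_0^4}.
\end{align*}
Then, using $c_x$-square-boundedness, $S_{T\wedge\tau} = x_0^2 + \sum_{i=1}^{T\wedge\tau} x_i^2\le x_0^2 + c_x T$, so $\sqrt{\big((B^\tau_T)^2 + L_T\big)/L_T}\le 1 + \frac{c_x T}{x_0^2}$.

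Substituting both bounds into the displayed inequality (the case where the logarithm is nonpositive being immediate, since then the left-hand side is $\le 0$), the left-hand side is at most $\frac{4c_0(1+\varepsilon)}{M}S_{T\wedge\tau}^2\big(\log\frac1\delta + \log(1 + \tfrac{c_x T}{x_0^2})\big)$, which is $\le \tfrac{\varepsilon^2}{4}S_{T\wedge\tau}^2$ exactly when $M\ge \frac{16 c_0(1+\varepsilon)}{\varepsilon^2}\big(\log\frac1\delta + \log(1 + \tfrac{c_x T}{x_0^2})\big)$, i.e.\ precisely condition \cref{eq:M-selection}. I expect the only subtle point to be pinning down the constant: replacing the telescoping step by the cruder bound $S_{t-1}\le S_{T\wedge\tau}$ would inflate $(B^\tau_T)^2$ by a factor $2$ and leave a stray $\sqrt 2$ inside the logarithm, pushing the required constant up to $32$ and adding a $\log\sqrt2$ remainder; it is the integral comparison, together with the precise matching $L_T = \frac{2c_0(1+\varepsilon)x_0^4}{M}$, that makes the clean leading constant $16$ and the clean $\log(1 + c_xT/x_0^2)$ term fall out.
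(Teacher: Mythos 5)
Your proof is correct and lands on exactly the constants in the claim, but two of its steps take a genuinely different (and somewhat cleaner) route than the paper's. First, you bound $(B^\tau_T)^2$ by the telescoping estimate $x_t^2 S_{t-1} = (S_t - S_{t-1})S_{t-1} \le \tfrac{1}{2}(S_t^2 - S_{t-1}^2)$, which sums to $\tfrac{1}{2}(S_{T\wedge\tau}^2 - x_0^4)$; the paper instead uses the cruder $S_{t-1}\le S_{T\wedge\tau}$ to get $\frac{4c_0(1+\varepsilon)}{M}(S_{T\wedge\tau}-x_0^2)S_{T\wedge\tau}$, which is larger than your bound by a factor of up to $2$. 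Second, you absorb the additive term $(\varepsilon/2)x_0^2$ by splitting the budget $\varepsilon S_{T\wedge\tau}$ in half additively (using $x_0^2=S_0\le S_{T\wedge\tau}$), whereas the paper squares $(I)$ via $(a+b)^2\le(1+\lambda)(a^2+\lambda^{-1}b^2)$ and then tunes $\lambda=1$, $\ell=x_0^4/2$, $m=\varepsilon^2/2$. The factor of $2$ you save on $(B^\tau_T)^2$ is exactly what the paper recovers through its more careful bookkeeping of the $x_0^4$ terms, so both routes terminate at the same leading constant $16$ and the same $\log(1+c_xT/x_0^2)$ term; yours is arguably more transparent because the choice $L_T=\frac{2c_0(1+\varepsilon)x_0^4}{M}$ visibly completes $S_{T\wedge\tau}^2-x_0^4$ to $S_{T\wedge\tau}^2$, and the ratio inside the logarithm collapses to $(S_{T\wedge\tau}/x_0^2)^2\le(1+c_xT/x_0^2)^2$ in one line rather than through the intermediate $c_xx_0^2T+c_x^2T^2/2$ bound. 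One shared imprecision worth flagging: the claim is phrased as $L_T\le\frac{2c_0(1+\varepsilon)x_0^4}{M}$, but both your argument and the paper's actually require $L_T$ to equal (or at least not be much smaller than) this value, since shrinking $L_T$ inflates the ratio $(B^2_T+L_T)/L_T$ inside the logarithm; you handle this correctly by fixing $L_T$ at the largest admissible value, which is the intended reading.
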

  \begin{proof}[Proof of \cref{claim:config-L-M}]
    Recall the definition
    \(
      S_t = \sum_{t=0}^t x_i^2.
    \)
    We first calculate the term $(B^\tau_T)^2$ by our construction,
    \begin{align}
    \label{eq:B-upper-bound-1}
      (B^\tau_T)^2 & \le \frac{4 c_0}{M} \sum_{t=1}^{T \wedge \tau} x_t^2 \left( (1+ \varepsilon)S_{t-1}\right)                                               \\
                   & = \frac{4 c_0 ( 1 + \varepsilon) }{M} \sum_{t=1}^{T \wedge \tau} x_t^2 \left(  S_{T \wedge \tau} - \underbrace{(S_{T \wedge \tau} - S_{t-1})}_{\ge 0} \right) \nonumber \\
                   & \le \frac{4 c_0 (1+ \varepsilon)}{M} (S_{T \wedge \tau} - x_0^2)S_{T \wedge \tau}.
                       \label{eq:B-upper-bound-2}
    \end{align}
    From \cref{eq:B-upper-bound-1}, the almost sure upper bound of $(B^\tau_T)^2$ assuming $x_t^2 \le c_x$ is
    \[
      (B^\tau_T)^2 \le \frac{4 c_0 (1+ \varepsilon) }{M} \sum_{t=1}^T c_x (x_0^2 + (t - 1) c_x ) \le \frac{4c_0  (1+ \varepsilon) }{M} (c_x x_0^2 T + c_x^2 T^2/2 )
    \]
    Since $(a+b)^2 \le (1 + \lambda)( a^2 + (1/\lambda) b^2)$ for all fixed $\lambda \ge 0$, the term $(I)$ from \cref{eq:final-goal} becomes
    \begin{align*}
      (I)^2
      \le
      (1 + \lambda) \left( 2\left({(B^{\tau}_T)}^2+L_T\right) \log \left(\frac{1}{\delta} \frac{\left((B^{\tau}_T)^2+L_T\right)^{1 / 2}}{L_T^{1 / 2}}\right) + \frac{\varepsilon^2 x_0^4}{4 \lambda} \right)
    \end{align*}

    Let $L_T \le {4c_0(1+\varepsilon) \ell}/{M}$ and $\ell$ to be determined.
    \begin{align*}
      (I)^2
       & \le (1 + \lambda) \left( 2\left(B^2_T+L_T\right) \log \left(\frac{1}{\delta} \frac{\left(B^2_T+L_T\right)^{1 / 2}}{L_T^{1 / 2}}\right) + \frac{\varepsilon^2 x_0^4}{4\lambda} \right)                                                                \\
       & \le (1 + \lambda) \left( \frac{8c_0(1+\varepsilon)}{M} \left( (S_{T \wedge \tau} - x_0^2)S_{T \wedge \tau} + \ell \right) \log \left(\frac{1}{\delta} \sqrt{\frac{\left(c_x x_0^2 T + c_x^2 T^2/2 + \ell \right)}{\ell} }\right) + \frac{\varepsilon^2 x_0^4}{4\lambda} \right)
    \end{align*}
    Let $M \ge (8c_0(1+\varepsilon) /m)  \log \left( \frac{1}{\delta} \sqrt{ \frac{c_x x_0^2 T + c_x^2 T^2/2 + \ell }{\ell} } \right) $ and $m$ to be determined, we can simplify
    \begin{align*}
      (I)^2 \le (1 + \lambda)\left( m( (S_{T\wedge \tau} - x_0^2)S_{T \wedge \tau} + \ell ) + \frac{\varepsilon^2 x_0^4}{4\lambda} \right)
    \end{align*}
    Let $\ell = x_0^4/2$,
    $m = \varepsilon^2/(1+\lambda)$ and $\lambda=1$, we have
    \begin{align*}
      (I)^2 \le \varepsilon^2 (( S_{T \wedge \tau} - x_0^2 )S_{T \wedge \tau} + x_0^4/2 + x_0^4/2 )
      \le \varepsilon^2 S_{T \wedge \tau}^2
    \end{align*}
    where the last inequality is due to $x_0^2 = S_0 \le S_{T \wedge \tau}$ and $x_0^4 \le x_0^2 S_{T \wedge \tau}$.
    The conclusion is that we could select
    \begin{align*}
      M & \ge (16 c_0 (1+ \varepsilon)/\varepsilon^2)  \log \left( \frac{1}{\delta} \sqrt{ \frac{2 c_x x_0^2 T + c_x^2 T^2 + x_0^4 }{ x_0^4} } \right)    \\
        & = (16 c_0 (1+ \varepsilon)/\varepsilon^2)  \log \left( \frac{1}{\delta} \sqrt{ \frac{ (c_x T + x_0^2)^2 }{ x_0^4} } \right)    \\
        & =  (16 c_0 (1+ \varepsilon)/\varepsilon^2)  \left(\log \left( \frac{1}{\delta} \right) + \log \left(1 + \frac{c_x T}{x_0^2} \right) \right)
    \end{align*}
    and the auxiliary variable
    \begin{align*}
      L_T \le \frac{2c_0 (1+ \varepsilon) x_0^4}{M}.
    \end{align*}
  \end{proof}
\end{proof}

\section{Conclusions}
\label{sec:conclu}
This research has successfully established the first probabilistic framework specifically conceived for the domain of sequential random projection, addressing the complexities and challenges introduced by sequential dependencies and the high-dimensional nature of variables within sequential decision-making processes. By innovating a stopped process construction and extending the method of mixtures to include a self-normalized process, we have derived non-asymptotic probability bounds that significantly extend the Johnson–Lindenstrauss lemma into the realm of sequential analysis. These methodological advancements not only provide a robust foundation for accurately controlling concentration events and analyzing error bounds in a sequential context but also represent a seminal contribution to the intersection of computational mathematics and machine learning.

Our contributions offer a comprehensive solution to the analytical hurdles posed by the sequential dependence inherent in dynamic environments, specifically the loss of independence and identical distribution characteristics among projection vectors conditioned on sequential decisions. By addressing these challenges with precise technical innovations and extending foundational analytical tools to sequential settings, our work paves the way for future research and practical applications of random projection in sequential decision-making settings. In doing so, it heralds a paradigm shift towards a more nuanced understanding and application of random projection techniques in adaptive, high-dimensional data processing, promising to significantly influence future research directions and applications across related disciplines.

\bibliography{ref}
\bibliographystyle{plainnat}

\appendix
\section{Proof of \cref{thm:anytime-bound}: Method of mixtures}
\label{sec:proof-anytime-bound}
Robbins-Siegmund method of mixtures \citep{robbins1970boundary} originally is developed to evaluate boundary crossing probabilities for Brownian motion.
The method was further developed in the general theory for self-normalized process \citep*{de2004self,pena2009self,lai09martingales}.
\begin{remark}[Essential idea of Laplace approximation]
  If we integrate the exponential of a function that has a pronounced maximum, then we can expect that the integral will be close to the exponential function of the maximum.
  In our case, let
  \[
    M_t (\lambda) = \exp\left( \lambda A_t - \frac{\lambda^2}{2} B_t^2 \right)
  \]
  Informally, with this principle of Laplace approximation, we would have
  \begin{align*}
    \max_{\lambda} M_t(\lambda) \approx \int_{\Omega} M_t(\lambda) d h(\lambda)
  \end{align*}
  where $h$ is some measure on $\Omega$.
\end{remark}
\emph{The main benefit of replacing the maximum $\max_{\lambda} M_t(\lambda)$ with an integral $\bar{M}_t := \int_{\Omega} M_t(\lambda) d h(\lambda)$ is that we can handle the expectation  $\E{\bar{M}_t}$ easier while we don't know the upper bound on $\E{\max_{\lambda } M_t(\lambda)}$.} This is formalized in the following lemma.
\begin{lemma}
  \label{lem:mixture-bounded-expect}
  Let $(h_t)$ be a sequence of probability measures on $\Omega$. If $(M_t(\lambda), \mathcal{F}_t, t \ge 1)$ is a supermartingale with $\E{M_1(\lambda)}\le 1$ for all $\lambda \in \Omega$, then for any $t\ge 1$, the integrated random variable $\bar{M}_t=\int_{\Omega} M_t(\lambda) d h_t(\lambda)$ has expectation $\E{\bar{M}_t} \le 1$.

  Further, let $\tau$ be a stopping time with respect to filtration $(\mathcal{F}_t)_{t\ge 0}$, i.e. $\{\tau \le t\} \in \mathcal{F}_t, \forall t \ge 0$. Then $M_{\tau}(\lambda)$ is almost surely well-defined with expectation $\E{M_{\tau}(\lambda)} \le 1$ as well as $\E{\bar{M}_\tau} \le 1$.
\end{lemma}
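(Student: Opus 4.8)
The plan is to establish the two assertions in order, each resting on a Tonelli-type interchange of expectation and integration together with the elementary fact that a nonnegative supermartingale has non-increasing expectations.

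\emph{Step 1 (the fixed-time bound).} Fix $t \ge 1$. Each $M_t(\lambda) = \exp\!\left(\lambda A_t - \tfrac{\lambda^2}{2} B_t^2\right)$ is nonnegative and jointly measurable in $(\omega,\lambda)$, so Tonelli's theorem permits the interchange $\E{\bar M_t} = \E{\int_\Omega M_t(\lambda)\, dh_t(\lambda)} = \int_\Omega \E{M_t(\lambda)}\, dh_t(\lambda)$. For each fixed $\lambda$ the supermartingale hypothesis gives $\E{M_t(\lambda)} \le \E{M_{t-1}(\lambda)} \le \cdots \le \E{M_1(\lambda)} \le 1$, so the integrand is bounded by $1$; since $h_t$ is a probability measure, $\E{\bar M_t} \le \int_\Omega 1\, dh_t(\lambda) = 1$.

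\emph{Step 2 (defining and bounding $M_\tau(\lambda)$).} For each fixed $\lambda$, $(M_t(\lambda))_{t\ge 1}$ is a nonnegative supermartingale, hence by Doob's supermartingale convergence theorem it converges almost surely to an integrable limit $M_\infty(\lambda)$; setting $M_\tau(\lambda) := M_\infty(\lambda)$ on $\{\tau = \infty\}$ makes $M_\tau(\lambda)$ almost surely well-defined. The stopped process $(M_{\tau\wedge t}(\lambda))_{t\ge 1}$ is again a nonnegative supermartingale, so $\E{M_{\tau\wedge t}(\lambda)} \le \E{M_1(\lambda)} \le 1$ for every $t$; letting $t\to\infty$, $M_{\tau\wedge t}(\lambda) \to M_\tau(\lambda)$ almost surely, and Fatou's lemma yields $\E{M_\tau(\lambda)} \le \liminf_{t} \E{M_{\tau\wedge t}(\lambda)} \le 1$. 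Finally, $\bar M_\tau = \int_\Omega M_\tau(\lambda)\, dh(\lambda)$ is almost surely well-defined, and one more application of Tonelli together with the previous bound gives $\E{\bar M_\tau} = \int_\Omega \E{M_\tau(\lambda)}\, dh(\lambda) \le 1$; alternatively one notes that $\bar M_t$ is itself a nonnegative supermartingale by conditional Tonelli and applies optional stopping directly to it.

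\emph{Anticipated obstacle.} The inequalities themselves are immediate; the only genuine care lies in the measure-theoretic bookkeeping. First, one must verify joint measurability of $(\omega,\lambda)\mapsto M_t(\lambda)(\omega)$ so that Tonelli applies and $\bar M_t$ is a bona fide random variable. Second, and more importantly, the phrase ``almost surely well-defined'' on $\{\tau=\infty\}$ must be interpreted through the almost sure limit supplied by the nonnegative supermartingale convergence theorem, since $M_\tau$ cannot be obtained by evaluation there; this is the step where nonnegativity of $M_t(\lambda)$ is essential. Both points are standard, but they are exactly what needs to be spelled out to make the statement precise.
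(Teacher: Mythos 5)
Your proof is correct and follows essentially the same route as the paper: Fubini/Tonelli plus the monotonicity of supermartingale expectations for the fixed-time bound, and the optional sampling theorem for the stopped version. The only difference is that you unpack the optional sampling step (stopped supermartingale, Fatou, and the convergence theorem to define $M_\tau(\lambda)$ on $\{\tau=\infty\}$) where the paper simply cites it, which is a reasonable amount of extra care rather than a different argument.
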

\begin{proof}
  Using Fubini's theorem and the fact that $M_t(\lambda)$ is a supermartingale with $\E{M_t(\lambda)} \le\E{M_1(\lambda)} =1 $, we have
  \begin{align*}
    \E{\bar{M}_t} = \int_{\Omega} \E{ M_t(\lambda)} d h_t(\lambda) \le 1.
  \end{align*}
  For the expectation of stopped version $M_{\tau}(\lambda)$ and $\bar{M}_{\tau}$, we apply (supermartingale) optional sampling theorem.
\end{proof}
Finally, we are comfortable to drive the proof of the self-normalized concentration bounds.
\begin{proof}[Proof of \cref{thm:anytime-bound}]
  Let $\Lambda = (\Lambda_t)$ be a sequence of independent Gaussian random variable with densities
  \[
    f_{\Lambda_t}(\lambda) = c(L_t) \exp ( - \frac{1}{2} L_t \lambda^2 )
  \]
  where $c(A) = \sqrt{A / 2\pi}$ is a normalizing constant. We explicitly calculate $\bar{M}_t$ for any $t \ge 1$,
  \begin{align*}
    \bar{M}_t
     & =\int_{\R} \exp \left(\lambda A_t-\frac{\lambda^2}{2} B_t^2\right) f_{\Lambda_t}(\lambda) d \lambda                                                                                          \\
     & =\int_{\R} \exp \left(-\frac{1}{2}\left(\lambda-\frac{A_t}{B_t^2}\right)^2 B_t^2+\frac{1}{2} \frac{A_t^2}{B_t^2}\right) f_{\Lambda_t}(\lambda) d \lambda                                     \\
     & =\exp \left(\frac{1}{2} \frac{A_t^2}{B_t^2}\right) \int_{\R} \exp \left(-\frac{1}{2}\left(\lambda-\frac{A_t}{B_t^2}\right)^2 B_t^2\right) f_{\Lambda_t}(\lambda) d \lambda                   \\
     & =c\left(L_t\right) \exp \left(\frac{1}{2} \frac{A_t^2}{B_t^2}\right) \int_{\R} \exp \left(-\frac{1}{2}\left(\left(\lambda-A_t / B_t^2\right)^2 B_t^2+\lambda^2 L_t\right)\right) d \lambda .
  \end{align*}
  Completing the square yields
  \[
    \left(\lambda-\frac{A_t}{B_t^2}\right)^2 B_t^2+\lambda^2 L_t =\left(\lambda-\frac{A_t}{L_t+B_t^2}\right)^2\left(L_t+B_t^2\right)+\frac{A_t^2}{B_t^2}-\frac{A_t^2}{L_t+B_t^2}.
  \]
  By the change of variables $\lambda^{\prime}=\lambda-A_t /(L_t+B_t^2)$ in the following $(i)$,
  \begin{align*}
    \bar{M}_t & = c\left(L_t\right) \exp \left(\frac{1}{2} \frac{A_t^2}{L_t+B_t^2}\right) \int_{\R} \exp \left(-\frac{1}{2}\left(\lambda-\frac{A_t}{L_t+B_t^2}\right)^2\left(L_t+B_t^2\right)\right) d \lambda \\
              & \stackrel{(i)}{=} {c}\left(L_t\right) \exp \left(\frac{1}{2} \frac{A_t^2}{L_t+B_t^2}\right) \int_{\R} \exp \left(-\frac{1}{2}\left(\lambda^2\left(L_t+B_t^2\right)\right)\right) d \lambda     \\
              & = \frac{c\left(L_t\right)}{c\left(L_t+B_t^2\right)} \exp \left(\frac{1}{2} \frac{A_t^2}{L_t+B_t^2}\right).
  \end{align*}
  A final application of Markov’s inequality yields
  \begin{align*}
     & \quad \mathbb{P}\left[\abs{A_{\tau}} \geq \sqrt{2\left(L_{\tau}+B_{\tau}^2\right) \log \left(\frac{1}{\delta} \frac{\left(L_{\tau}+B_{\tau}^2\right)^{1 / 2}}{L_{\tau}^{1 / 2}}\right)}\right] \\
     & = \mathbb{P}\left[\frac{c\left(L_{\tau}\right)}{c\left(L_{\tau}+B_{\tau}^2\right)} \exp \left(\frac{1}{2} \frac{A_{\tau}^2}{L_{\tau}+B_{\tau}^2}\right) \geq \frac{1}{\delta}\right]           \\
     & \le \delta \cdot \mathbb{E}\left[ \frac{c\left(L_{\tau}\right)}{c\left(L_{\tau}+B_{\tau}^2\right)} \exp \left(\frac{1}{2} \frac{A_{\tau}^2}{L_{\tau}+B_{\tau}^2}\right)\right]                 \\
     & \stackrel{\text { i) }}{\leq} \delta \cdot \mathbb{E}\left[\bar{M}_{\tau}\right] \stackrel{\text { (ii) }}{\leq} \delta,
  \end{align*}
  where (i) uses the inequality for $\bar{M}_{\tau}$ derived above, and (ii) follows from \cref{lem:mixture-bounded-expect}.

  To get the anytime result in \cref{thm:anytime-bound}, we define the stopping time
  \begin{align*}
    \tau = \min\left\{ t\ge 1 : \abs{A_t} \geq \sqrt{2\left(L_t+B_t^2\right) \log \left(\frac{1}{\delta} \frac{\left(L_t+B_t^2\right)^{1 / 2}}{L_t^{1 / 2}}\right)} \right\}
  \end{align*}
  With an application of extended version of \cref{lem:trigger},
  and applying the previous inequality yields
  \begin{align*}
     & \quad \mathbb{P}\left[\exists t \ge 1, \abs{A_t} \geq \sqrt{2\left(L_t+B_t^2\right) \log \left(\frac{1}{\delta} \frac{\left(L_t+B_t^2\right)^{1 / 2}}{L_t^{1 / 2}}\right)}\right]             \\
     & = \mathbb{P}\left[\tau < \infty, \abs{A_\tau} \geq \sqrt{2\left(L_\tau+B_\tau^2\right) \log \left(\frac{1}{\delta} \frac{\left(L_\tau+B_\tau^2\right)^{1 / 2}}{L_\tau^{1 / 2}}\right)}\right] \\
     & \le \mathbb{P}\left[\abs{A_\tau} \geq \sqrt{2\left(L_\tau+B_\tau^2\right) \log \left(\frac{1}{\delta} \frac{\left(L_\tau+B_\tau^2\right)^{1 / 2}}{L_\tau^{1 / 2}}\right)}\right]              \\
     & \le \delta.
  \end{align*}
  This completes the proof.
\end{proof}

\section{Additional lemmas}
\label{sec:additional-lemma}
For the completeness, we provide the full details of \cref{lem:mgf-beta}, which is adapted from \cite{li2024simple}.
\begin{proof}
  We utilize the order-2 recurrence for central moments~\citep{skorski2023bernstein}:
  for a beta random varaiable $X \sim \operatorname{Beta}(\alpha, \beta)$, we have
  \begin{align*}
    \mathbb{E}\left[(X-\mathbb{E}[X])^p\right]= & \frac{(p-1)(\beta-\alpha)}{(\alpha+\beta)(\alpha+\beta+p-1)} \cdot \mathbb{E}\left[(X-\mathbb{E}[X])^{p-1}\right] \\ & +\frac{(p-1) \alpha \beta}{(\alpha+\beta)^2(\alpha+\beta+p-1)} \cdot \mathbb{E}\left[(X-\mathbb{E}[X])^{p-2}\right]
  \end{align*}
  Let $m_p := \frac{\mathbb{E}\left[(X-\mathbb{E}[X])^p\right]}{p!}$,
  when $\alpha \ge \beta$, it follows that $m_p$ is non-negative when $p$ is even, and negative otherwise.
  Thus, for even $p$,
  $$m_p \le \frac{1}{p} \cdot \frac{\alpha \beta }{(\alpha + \beta)^2(\alpha+\beta+p-1)} m_{p-2} \le \frac{\var{X}}{p} \cdot m_{p-2}.$$
  After repeating the above recursive equation for $p / 2$ times and combining with $m_p \leqslant 0$ for odd $p$, it yields the following relationships
  $$m_p \leqslant \begin{cases}\frac{\var{X}^{\frac{p}{2}}}{p ! !} & p \text { even } \\ 0 & p \text { odd }\end{cases}.$$
  With the application of $p ! !=2^{p / 2}(p / 2) !$ for even $p$, for $t \geqslant 0$ we obtain
  \[
    \E{\exp(\lambda [X - \E{X}])} \leqslant 1+\sum_{p=2}^{+\infty} m_p \lambda^p = 1 + \sum_{p=1}^{+\infty} (\lambda^2 \var{X}/2)^p/p! = \exp \left(\frac{\lambda^2 \var{X}}{2}\right)
  \]
\end{proof}

\begin{lemma}
  \label{lem:inner-dist}
  For any fixed unit vector $u \in \mathbb{S}^{n-1}$, for any random vector $v \sim \mathcal{U}(\mathbb{S}^{n-1})$, the inner product $u^{\top} v$ is distributed as $2 \operatorname{Beta}\left(\frac{n-1}{2}, \frac{n-1}{2}\right)-1$.
\end{lemma}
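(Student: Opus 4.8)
The plan is to reduce the statement to computing the one-dimensional marginal density of a single coordinate of a uniform random vector on $\mathbb{S}^{n-1}$, and then to recognize that density --- after an affine change of variable --- as the $\operatorname{Beta}\!\big(\tfrac{n-1}{2},\tfrac{n-1}{2}\big)$ density. First I would use rotational invariance: $\mathcal{U}(\mathbb{S}^{n-1})$ is invariant under the orthogonal group, so if $Q$ is orthogonal with $Qu=e_1$ then $Qv\sim\mathcal{U}(\mathbb{S}^{n-1})$ and $u^\top v=(Qu)^\top(Qv)=e_1^\top(Qv)$. Hence it suffices to determine the law of $v_1:=e_1^\top v$ for $v\sim\mathcal{U}(\mathbb{S}^{n-1})$.

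Next I would compute the density of $v_1$ on $(-1,1)$, in either of two equivalent ways. (a) Gaussian representation: write $v\stackrel{d}{=}g/\|g\|$ with $g\sim N(0,I_n)$, so $v_1^2\stackrel{d}{=}g_1^2/(g_1^2+R^2)$ with $R^2=\sum_{i=2}^n g_i^2$; since $g_1^2\sim\operatorname{Gamma}(\tfrac12,\tfrac12)$ and $R^2\sim\operatorname{Gamma}(\tfrac{n-1}{2},\tfrac12)$ are independent, the Gamma--Beta identity gives $v_1^2\sim\operatorname{Beta}(\tfrac12,\tfrac{n-1}{2})$, while $\operatorname{sign}(v_1)$ is an independent symmetric $\pm1$ sign (flip $g_1\mapsto-g_1$); the change of variable $w=s^2$ then shows $|v_1|$ has density $\propto(1-s^2)^{(n-3)/2}$ on $(0,1)$, hence $v_1$ has density $\propto(1-t^2)^{(n-3)/2}$ on $(-1,1)$. (b) Directly from the surface-area element: in the coordinates $v=(t,\sqrt{1-t^2}\,\omega)$ with $t\in(-1,1)$ and $\omega\in\mathbb{S}^{n-2}$, the uniform surface measure factors as $(1-t^2)^{(n-3)/2}\,dt\otimes d\sigma_{\mathbb{S}^{n-2}}(\omega)$ --- the exponent being the product of the $(1-t^2)^{(n-2)/2}$ scaling of the radius-$\sqrt{1-t^2}$ latitude sphere and the $(1-t^2)^{-1/2}$ meridian arc-length factor --- giving the same density for $v_1$.

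Finally I would make the affine change $X=(v_1+1)/2$, so that $1-v_1^2=4X(1-X)$ and $dv_1=2\,dX$; then the density of $X$ on $(0,1)$ is proportional to $\big(X(1-X)\big)^{(n-3)/2}=X^{(n-1)/2-1}(1-X)^{(n-1)/2-1}$, which, the normalizing constant being forced since it integrates to $1$, is exactly the $\operatorname{Beta}(\tfrac{n-1}{2},\tfrac{n-1}{2})$ density. Equivalently $u^\top v=v_1=2X-1\sim 2\operatorname{Beta}(\tfrac{n-1}{2},\tfrac{n-1}{2})-1$, which is the claim.

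The main obstacle is essentially nonexistent: this is a classical identity, and the only thing requiring attention is careful bookkeeping of the chain of changes of variables (in route (a): $g_1^2/(g_1^2+R^2)\to|v_1|\to v_1\to(v_1+1)/2$) without dropping Jacobian factors. The one mildly counterintuitive point is reconciling the intermediate fact $v_1^2\sim\operatorname{Beta}(\tfrac12,\tfrac{n-1}{2})$ with the final symmetric $\operatorname{Beta}(\tfrac{n-1}{2},\tfrac{n-1}{2})$ for $(v_1+1)/2$, since the Beta parameters genuinely change along the way; route (b) avoids this subtlety by landing directly on the density $\propto(1-t^2)^{(n-3)/2}$.
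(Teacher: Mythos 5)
Your proposal is correct, and your route (b) --- rotational invariance to reduce to the first coordinate, the factorization of the uniform surface measure into the radius-$\sqrt{1-t^2}$ latitude sphere times the meridian arc-length factor yielding the density $\propto(1-t^2)^{(n-3)/2}$, and the affine change $X=(v_1+1)/2$ landing on $\operatorname{Beta}\bigl(\tfrac{n-1}{2},\tfrac{n-1}{2}\bigr)$ --- is exactly the paper's argument, which phrases the same factorization as a frustum-of-a-cone surface-area computation. Your Gaussian/Gamma--Beta route (a) is a correct alternative but is not needed.
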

\begin{proof}
  By {rotational invariance} of $\operatorname{Uniform}(\mathbb{S}^{n-1})$,
  the distribution of $u^{\top} v$ should be identical $\forall u \in \mathbb{S}^{n-1}$.
  WLOG, let us look at $u = e_1 = (1, 0, \ldots, 0)$  that would project $v$ to the first coordinate, i.e.,
  the value of $v^{\top} e_1=v_1$.
    {Let $v_1$ be defined as $X$, which is a random variable.}
  Probability density that $X=x \in[-1,1]$ is proportional to the surface area occupied between $x$ and $x+d x$ occupied by the other coordinates.
  The surface area is a frustum of a cone with base as a $n-1$ dimension shell of radius as $\sqrt{1-x^2}$ and a height of $d x$ with slope of the cone as $1 / \sqrt{1-x^2}$.
  Hence, the probability density is
  \begin{align*}
    f_{X}(x)
     & \propto \frac{\sqrt{1-x^2}^{n-2}}{\sqrt{1-x^2}} \propto (1-x)^{\frac{n-3}{2}} (1+x)^{\frac{n-3}{2}}
  \end{align*}
  We examine the transformed random variable $Y = (X+1)/2$, i.e. $X = 2Y -1$. By the Change-of-Variable Technique,
  \begin{align*}
    f_{Y}(y) = 2 f_X(2y - 1) \propto (2-2y)^{\frac{n-3}{2}} (2y)^{\frac{n-3}{2}} \propto (1-y)^{\frac{n-3}{2}}y^{\frac{n-3}{2}}
  \end{align*}
  Thus, \[Y \sim \operatorname{Beta}\left( \frac{n-1}{2}, \frac{n-1}{2} \right). \]
  Then, by the fact $X = 2Y - 1$,
  \[
    X \sim 2 \operatorname{Beta}\left( \frac{n-1}{2}, \frac{n-1}{2} \right) - 1.
  \]
\end{proof}

\end{document}